\theoremstyle{plain}
\theoremstyle{definition}
\theoremstyle{remark}
\theoremstyle{plain}
\newtheorem{theorem}{Theorem}[section]
\newtheorem{lemma}[theorem]{Lemma}
\newtheorem{corollary}[theorem]{Corollary}
\theoremstyle{definition}
\newtheorem{remark}[theorem]{Remark}
\newcommand{\bR}{\mathbb{R}}
\newcommand{\sq}{\square}
\begin{document}

\title{A Differential Harnack Inequality for the Newell-Whitehead Equation} 

\author[Booth]{Derek Booth}
\address{Department of Mathematics, Harvard University \\ Cambridge, MA 02138, USA}
\email{derekbooth@college.harvard.edu}

\author[Burkart]{Jack Burkart}
\address{Department of Mathematics, Stony Brook University \\ Stony Brook, NY 11794, USA}
\email{jack.burkart@stonybrook.edu}
 
\author[Cao]{Xiaodong Cao}
\address{Department of Mathematics, Cornell University \\ Ithaca, NY 14853-4201, USA}
\email{cao@math.cornell.edu}

\author[Hallgren]{Max Hallgren}
\address{Department of Mathematics, Cornell University \\ Ithaca, NY 14853-4201, USA}
\email{meh249@cornell.edu}

\author[Munro]{Zachary Munro}
\address{Department of Mathematics, Cornell University \\ Ithaca, NY 14853-4201, USA}
\email{zym2@cornell.edu}

\author[Snyder]{Jason Snyder}
\address{Department of Mathematics, Cornell University \\ Ithaca, NY 14853-4201, USA}
\email{jss474@cornell.edu}

\author[Stone]{Tom Stone}
\address{Department of Mathematics, University of Wisconsin-Madison \\ Madison, WI 53706, USA}
\email{tdstone@wisc.edu}

\date{\today}
\begin{abstract}
This paper will develop a Li-Yau-Hamilton type differential Harnack estimate for positive solutions to the Newell-Whitehead equation on $\bR^n$. We then use our LYH-differential Harnack inequality to prove several properties about positive solutions to the equation, including deriving a classical Harnack inequality, and characterizing standing solutions and traveling wave solutions.
\end{abstract}
\maketitle
\section{Introduction}
Consider any positive solution $f: \bR ^n \times [0, \infty) \rightarrow \bR$ to the Newell-Whitehead Equation, 
\begin{equation}\label{eq:1}
f_t = \Delta f + af - bf^3,
\end{equation}

here, we assume $a > 0, b > 0$. This equation was first introduced by A. C. Newell and J. A. Whitehead in 1969 \cite{nw69}, and was later studied by L. Segel \cite{segel}. Exact solutions to the equation were computed using the Homotopy Perturbation method by S. Nourazar, M. Soori, and A. Nazari-Golshan in 2011 \cite{nsg}, while some approximate solutions were computed in 2015 by J. Patade and S. Bhalekar \cite{pb2015}. The equation is an example of a reaction-diffusion equation, as it is used to model the change of concentration of a substance, given any chemical reactions that the substance may be undergoing (modeled by the $af - bf^3$ term), and any diffusion causing the chemical to spread throughout the medium (modeled by the $\Delta f$ term). More specifically, the Newell-Whitehead equation models Rayleigh-B$\acute{\text{e}}$nard convection, a reaction-diffusion phenomenon that occurs when a fluid is heated from below. \\
\indent In this paper, we are just concerned with positive solutions on $\bR^n$. For further discussion about working with functions on closed manifolds or complete non-compact manifolds, see \cite{CLPW14}. Our main theorem, Theorem 1.1, will outline a Li-Yau-Hamilton type differential Harnack estimate (2) that we will prove based on computing time-evolutions of the relevant quantities, see Hamilton \cite{hamilton11}. In the following, \textit{Harnack inequality} or \textit{Harnack estimate} refers to an LYH-type differential Harnack inequality. As an application, we will integrate our estimate (2) along a space time curve to obtain a classical Harnack inequality (16), see Corollary 4.1. Then we will use our \textit{Harnack estimate} to characterize both traveling wave solutions and standing solutions to the Newell-Whitehead equation.

\begin{theorem}
With $f > 0$ a solution to \eqref{eq:1}, define $l = \log f$.  Then:
\begin{equation}\label{H}
H =\alpha \Delta l + \beta |\nabla l|^2 +\gamma e^{2l} + \varphi(t) \geq 0,
\end{equation}
provided the following three inequalities hold:\\
\\
\begin{tabular}{ l l }
  (a) & $\alpha > \beta \geq 0$, \\
  (b) & $\gamma \leq \dfrac{-nb\alpha^2(2\alpha+\beta)}{3n\alpha^2-2(\alpha-\beta)\beta} < 0$,\\
  (c) & $4\gamma(\alpha-\beta)+n\alpha^2b<0$, \\
\end{tabular}\\
with $\varphi(t) = \left(\dfrac{a\alpha}{1-e^{2at}}\right)\left(\dfrac{\gamma}{\alpha b}e^{2at}-\dfrac{\alpha \gamma n}{4\gamma(\alpha-\beta)+\alpha^2bn}\right)$.
\\
If, instead of inequality (c), we have:\\
\begin{tabular}{ l l }
  (d) & $4\gamma(\alpha-\beta)+n\alpha^2b\geq0,$ \\
\end{tabular}\\
then:
\begin{equation}
H =\alpha \Delta l + \beta |\nabla l|^2 +\gamma e^{2l} + \psi(t) \geq 0,
\end{equation}
for:
\[\psi(t) = \begin{cases}
\dfrac{n\alpha^2}{2(\alpha-\beta)t} & t \leq T := \dfrac{n\alpha^2}{2(\alpha-\beta)(-a\gamma)}\left(2\left(\dfrac{\alpha-\beta}{n\alpha^2}\right)\gamma+b\right), \\
\dfrac{-an\alpha^2\gamma\left(e^{2a(t-T)}+1\right)}{n\alpha^2b\left(e^{2a(t-T)}+1\right)+4\gamma(\alpha-\beta)} & t>T.
\end{cases}\]
\end{theorem}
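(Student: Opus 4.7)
The plan is to follow the Li--Yau--Hamilton strategy: derive an evolution inequality for $H$ whose structure at an interior zero of $H$ is compatible with the maximum principle modulo the correct choice of the time-correction $\varphi$ (or $\psi$). First I would rewrite \eqref{eq:1} in terms of $l = \log f$, obtaining the quasilinear equation $l_t = \Delta l + |\nabla l|^2 + a - be^{2l}$. Using this together with Bochner's identity $\Delta|\nabla l|^2 = 2|\nabla^2 l|^2 + 2\langle \nabla l, \nabla \Delta l\rangle$, I would compute $(\partial_t - \Delta)$ applied to each of $\Delta l$, $|\nabla l|^2$, and $e^{2l}$. The gradient term $2\alpha \langle \nabla l, \nabla \Delta l\rangle$ that arises is rewritten via $\nabla H = \alpha \nabla \Delta l + 2\beta \nabla^2 l \cdot \nabla l + 2\gamma e^{2l}\nabla l$ to isolate the transport term $2\langle \nabla l, \nabla H\rangle$, which simultaneously causes the mixed Hessian--gradient terms $\nabla^2 l(\nabla l,\nabla l)$ to cancel. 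The result should be
\begin{multline*}
(\partial_t - \Delta) H = 2(\alpha-\beta)|\nabla^2 l|^2 + 2\langle \nabla l, \nabla H\rangle - 2\alpha b e^{2l}\Delta l \\
- [4(\alpha+\beta)b + 6\gamma]\,e^{2l}|\nabla l|^2 + 2a\gamma e^{2l} - 2b\gamma e^{4l} + \varphi'(t).
\end{multline*}

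Next I would apply the pointwise bound $|\nabla^2 l|^2 \geq (\Delta l)^2/n$ and work at any hypothetical first spacetime point where $H$ vanishes. There $\nabla H = 0$, so the transport term drops out, and $\Delta l = -\alpha^{-1}(\beta |\nabla l|^2 + \gamma e^{2l} + \varphi)$. After substitution the right-hand side becomes a polynomial $P(u,v)$ in $u := |\nabla l|^2 \geq 0$ and $v := e^{2l} > 0$, plus $\varphi'(t)$. Direct expansion gives coefficients $\tfrac{2(\alpha-\beta)\beta^2}{n\alpha^2}$ for $u^2$, $\tfrac{4(\alpha-\beta)\beta\gamma}{n\alpha^2} - 4\alpha b - 2\beta b - 6\gamma$ for $uv$, and $\tfrac{2(\alpha-\beta)\gamma^2}{n\alpha^2}$ for $v^2$. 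Condition (a) makes the $u^2$ and $v^2$ coefficients non-negative, and a short manipulation shows that (b) is algebraically equivalent to non-negativity of the $uv$ coefficient; hence the purely quadratic part of $P$ is a non-negative quadratic form on the first quadrant.

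It then remains to control the linear and constant contributions in $(u,v)$ by choice of $\varphi$. The coefficient of $u$ is $\tfrac{4(\alpha-\beta)\beta\varphi}{n\alpha^2}$, non-negative whenever $\varphi \geq 0$; minimizing the quadratic-in-$v$ part reduces the requirement to a Riccati-type inequality $\varphi' + c_2 \varphi^2 + c_1 \varphi + c_0 \geq 0$ with constants explicit in $\alpha, \beta, \gamma, a, b, n$. Condition (c) fixes the sign of $c_0$ so that this ODE admits the separable/logistic solution given in the statement, with $\varphi(0^+) = +\infty$ because $1 - e^{2at} \to 0^-$ as $t \to 0^+$. Under the complementary condition (d) the constant $c_0$ vanishes or flips sign: the ODE then has a pure $1/t$ blow-up of Li--Yau type, $\psi(t) = \tfrac{n\alpha^2}{2(\alpha-\beta)t}$, valid until the matching time $T$ at which this branch meets the non-singular one, after which ODE uniqueness continues the solution as the second piece of $\psi$.

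Finally, $H \geq 0$ follows from the parabolic maximum principle: the blow-up of $\varphi$ (resp.\ $\psi$) as $t \to 0^+$ forces $H > 0$ for small $t$, and at any hypothetical first interior zero the computation above yields $(\partial_t - \Delta) H \geq 0$, contradicting the spacetime minimum property. On the noncompact $\bR^n$ one would localize with a standard cutoff/barrier argument in the spirit of the original Li--Yau proof. The principal obstacle will be the algebraic bookkeeping of steps two through four: verifying that (b) is exactly non-negativity of the $uv$ coefficient and that (c) and (d) form precisely the dichotomy under which the induced Riccati ODE admits the two explicit solutions stated, so that the inequalities (a)--(d) are tight rather than merely sufficient.
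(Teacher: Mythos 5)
Your proposal is correct and follows essentially the same route as the paper: the same evolution computation for $H$ via Bochner's identity, the same Cauchy--Schwarz bound $|\nabla^2 l|^2 \geq (\Delta l)^2/n$ with substitution for $\Delta l$ at a first zero of $H$, the same identification of (a) and (b) with non-negativity of the quadratic coefficients in $(|\nabla l|^2, e^{2l})$, and the same reduction to a Riccati inequality for $\varphi$ (resp.\ the $1/t$-then-matched branch of $\psi$ under (d)), closed by the maximum principle with spatial localization. The coefficients you record (e.g.\ $-[4(\alpha+\beta)b+6\gamma]$ for $e^{2l}|\nabla l|^2$ and the equivalence of (b) with the $uv$-coefficient being non-negative) agree with the paper's Lemmas 2.1 and 2.2.
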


\begin{remark}
Condition $(a)$ of Theorem $1.1$ says that we are allowed to choose $\beta=0$. While our proof of this theorem will require $\beta > 0$, we can take $\beta \to 0$ at the end. 
\end{remark}

\begin{remark}
The quantity $H$ defined in (2) and (3) is referred to as a (LYH-differential) Harnack quantity. 
\end{remark}
\begin{remark}
Inequalities (2) and (3) are called differential Harnack inequalities because they involve derivatives of $f$, and integration along space-time paths leads to a comparison of the function $f$ at different points in space and time.
\end{remark}
\section{Li-Yau-Hamilton type differential Harnack Inequality}

We begin by calculating the evolution of the Harnack quantity $H$. For notational convenience, we introduce the box operator $\sq g(x,t) := g_t - \Delta g$. Our first lemma is the following:
\begin{lemma}
With $H$ defined as in \eqref{H}, we have:
\begin{align}
\sq H &= 2\nabla l \cdot \nabla H + 2(\alpha-\beta) |\nabla \nabla l|^2 + \varphi_t - \Delta \varphi-2\nabla l \cdot \nabla \varphi\\
&- 2be^{2l}\bigg[(H - \varphi) + 2\alpha |\nabla l|^2 + \beta |\nabla l|^2-\gamma\frac{a}{b} + 3\frac{\gamma}{b}|\nabla l|^2\bigg]\nonumber.
\end{align}
\end{lemma}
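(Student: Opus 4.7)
The plan is to compute $\Box H$ by computing $\Box$ of each of the four summands defining $H$ and then reorganizing the result so that $\nabla l \cdot \nabla H$ and the remaining terms become visible.

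First, I would convert the Newell-Whitehead equation into an evolution equation for $l=\log f$. Since $f_t = e^l l_t$, $\nabla f = e^l \nabla l$, and $\Delta f = e^l(\Delta l + |\nabla l|^2)$, equation \eqref{eq:1} becomes
\begin{equation*}
\Box l = l_t-\Delta l = |\nabla l|^2 + a - be^{2l}.
\end{equation*}
This is the one ingredient that feeds into all subsequent computations, so I would isolate it as a preliminary identity.

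Next I would compute $\Box$ of each piece of $H$. Since $\partial_t$ and $\Delta$ commute on $\bR^n$, differentiating the preliminary identity gives
\begin{equation*}
\Box(\Delta l) = \Delta(\Box l) = \Delta|\nabla l|^2 - b\,\Delta e^{2l}.
\end{equation*}
For the gradient term, the Bochner identity $\Delta|\nabla l|^2 = 2|\nabla\nabla l|^2 + 2\nabla l\cdot\nabla\Delta l$ and the chain rule $\partial_t|\nabla l|^2 = 2\nabla l\cdot\nabla l_t$ yield
\begin{equation*}
\Box(|\nabla l|^2) = 2\nabla l\cdot\nabla|\nabla l|^2 - 2b\,\nabla l\cdot\nabla e^{2l} - 2|\nabla\nabla l|^2.
\end{equation*}
For the exponential term, using $\Delta e^{2l} = 2e^{2l}(\Delta l + 2|\nabla l|^2)$ and $(e^{2l})_t = 2e^{2l}l_t$ gives
\begin{equation*}
\Box(e^{2l}) = -2e^{2l}|\nabla l|^2 + 2ae^{2l} - 2be^{4l}.
\end{equation*}
Multiplying these by $\alpha$, $\beta$, $\gamma$ respectively and adding $\Box\varphi = \varphi_t - \Delta\varphi$ assembles a preliminary formula for $\Box H$.

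The final step is a bookkeeping rearrangement. I would expand $\Delta|\nabla l|^2$ via Bochner to extract the $2\alpha|\nabla\nabla l|^2$ and $2\alpha\nabla l\cdot\nabla\Delta l$ contributions, expand $\alpha b\Delta e^{2l}$ to produce the $-2\alpha b e^{2l}\Delta l - 4\alpha b e^{2l}|\nabla l|^2$ terms, and rewrite $\beta b\nabla l\cdot\nabla e^{2l} = 2\beta b e^{2l}|\nabla l|^2$. Then I would recognize the combination
\begin{equation*}
2\nabla l\cdot\nabla H = 2\alpha\nabla l\cdot\nabla\Delta l + 2\beta\nabla l\cdot\nabla|\nabla l|^2 + 4\gamma e^{2l}|\nabla l|^2,
\end{equation*}
subtract it, and collect what remains. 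The $|\nabla\nabla l|^2$ coefficient becomes $2(\alpha-\beta)$, while every term proportional to $e^{2l}$ or $e^{4l}$ must be reorganized into the prescribed bracket; in particular $-2be^{2l}(H-\varphi)$ reproduces the $-2\alpha b e^{2l}\Delta l$, $-2\beta b e^{2l}|\nabla l|^2$ and $-2\gamma b e^{4l}$ pieces, while the leftover $-6\gamma e^{2l}|\nabla l|^2 + 2a\gamma e^{2l}$ combines with the $-4\alpha b e^{2l}|\nabla l|^2$ and $-2\beta b e^{2l}|\nabla l|^2$ contributions to produce exactly the quantity $-2be^{2l}[2\alpha|\nabla l|^2+\beta|\nabla l|^2 - \gamma a/b + 3\gamma|\nabla l|^2/b]$.

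The main obstacle is not conceptual but organizational: the $e^{2l}|\nabla l|^2$ terms arise from three different sources (the $\alpha\Delta e^{2l}$ expansion, the $\gamma\Box(e^{2l})$ computation, and the $2\nabla l\cdot\nabla H$ rewriting), with signs and coefficients that must cancel in exactly the right way to leave the factor $3\gamma/b$ inside the bracket. Carefully tracking these contributions is the only real care point; otherwise the identity follows from the three $\Box$-computations and one application of the Bochner identity.
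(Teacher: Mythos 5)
Your proposal is correct and follows essentially the same route as the paper: compute $\sq l$, then $\sq(\Delta l)$, $\sq(|\nabla l|^2)$ and $\sq(e^{2l})$, apply the Bochner identity, and regroup around $2\nabla l\cdot\nabla H$; your three box-computations agree with the paper's displayed ones once one expands $\nabla l\cdot\nabla e^{2l}=2e^{2l}|\nabla l|^2$. The only slip is that your displayed expansion of $2\nabla l\cdot\nabla H$ omits the $+2\nabla l\cdot\nabla\varphi$ contribution from the $\varphi$ summand of $H$ --- this is exactly the term that the explicit $-2\nabla l\cdot\nabla\varphi$ in the lemma cancels, so it must be retained for the identity to close when $\varphi$ is allowed to depend on $x$.
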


\begin{proof}
We begin by calculating the evolution quantities of the components of $H$.
\begin{equation}
\sq (\Delta l) = \Delta |\nabla l|^2 - 2b \Delta l e^{2l} - 4b |\nabla l|^2 e^{2l}.
\end{equation}
\begin{equation}
\sq( |\nabla l|^2) = 2\nabla l \cdot \nabla (\Delta l) +2 \nabla l \cdot \nabla (|\nabla l|^2)-4b |\nabla l|^2 e^{2l} - \Delta (|\nabla l|^2).
\end{equation}
\begin{equation}
\sq(e^{2l}) = 2 \nabla l \cdot \nabla (e^{2l})+2ae^{2l} - 2be^{4l} - 6|\nabla l|^2e^{2l}.
\end{equation}

So, using these in the evolution equation for $H$:
\begin{eqnarray*}
\sq H &=& \alpha \sq (\Delta l) + \beta \sq(|\nabla l|^2) + \gamma \sq(e^{2l})+\sq \varphi\\
&=& \alpha \bigg(\Delta |\nabla l|^2 - 2b \Delta l e^{2l} - 4b |\nabla l|^2 e^{2l}\bigg)\\
&+& \beta \bigg(2\nabla l \cdot \nabla (\Delta l) +2 \nabla l \cdot \nabla (|\nabla l|^2)-4b |\nabla l|^2 e^{2l} - \Delta (|\nabla l|^2)\bigg)\\
&+& \gamma\bigg(2 \nabla l \cdot \nabla (e^{2l})+2ae^{2l} - 2be^{4l} - 6|\nabla l|^2e^{2l}\bigg) + \varphi_t - \Delta \varphi.
\end{eqnarray*}

Now, by using the Weitzenbock-Bochner formula for $\bR ^n$:
\begin{equation}
\Delta(|\nabla l|^2) = 2\nabla l \cdot \nabla(\Delta l)+2|\nabla \nabla l|^2.
\end{equation}
the expression can be simplified and the lemma follows.
\end{proof}

Using the Cauchy-Schwarz inequality, we can show that the Harnack quantity satisfies the following inequality.

\begin{lemma}
The following inequality holds:
\begin{align}
\sq H &\geq  2\nabla l \cdot \nabla H + H\bigg[2\left(\frac{\alpha-\beta}{n\alpha^2}\right)(H-2\beta |\nabla l|^2-2\gamma e^{2l}-2\varphi)-2be^{2l}\bigg]\\
&+\varphi_t-\Delta \varphi - 2\nabla l \cdot \nabla \varphi+2|\nabla l|^2e^{2l}\bigg[2\left(\frac{\alpha-\beta}{n\alpha^2}\right)\beta \gamma -2\alpha b-\beta b-3\gamma\bigg]\nonumber\\
&+ |\nabla l|^2\varphi\bigg[4\left(\frac{\alpha-\beta}{n\alpha^2}\right)\beta\bigg]+e^{2l}\bigg[4\left(\frac{\alpha-\beta}{n\alpha^2}\right)\gamma \varphi+2b\varphi+2a\gamma\bigg]\nonumber\\
&+2\left(\frac{\alpha-\beta}{n\alpha^2}\right)\bigg[\beta^2|\nabla l|^4 + \gamma^2e^{4l}+\varphi^2\bigg]\nonumber.
\end{align}
\end{lemma}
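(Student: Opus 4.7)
The plan is to take the identity for $\square H$ supplied by Lemma 2.1 and replace the Hessian term $2(\alpha-\beta)|\nabla\nabla l|^2$ by a non-negative lower bound that is a polynomial in the quantities $H$, $|\nabla l|^2$, $e^{2l}$, and $\varphi$. Because the rest of the identity is already expressed in these building blocks, all that remains is algebraic bookkeeping.

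First, by the pointwise Cauchy--Schwarz inequality
\[ (\Delta l)^2 \;\leq\; n\,|\nabla\nabla l|^2,\]
and because condition (a) gives $\alpha-\beta > 0$ (we work with $\beta>0$ as explained in Remark 1.2), we obtain
\[ 2(\alpha-\beta)\,|\nabla\nabla l|^2 \;\geq\; \frac{2(\alpha-\beta)}{n}\,(\Delta l)^2.\]
Next, I would use the definition of $H$ to solve for the Laplacian,
\[ \Delta l \;=\; \frac{1}{\alpha}\bigl(H-\beta|\nabla l|^2-\gamma e^{2l}-\varphi\bigr),\]
so that
\[ 2(\alpha-\beta)\,|\nabla\nabla l|^2 \;\geq\; \frac{2(\alpha-\beta)}{n\alpha^2}\bigl(H-\beta|\nabla l|^2-\gamma e^{2l}-\varphi\bigr)^2.\]

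The second step is to expand the right-hand side. Treating $H$ separately from the other three quantities gives
\[ \bigl(H-\beta|\nabla l|^2-\gamma e^{2l}-\varphi\bigr)^{2} \;=\; H\bigl(H-2\beta|\nabla l|^2-2\gamma e^{2l}-2\varphi\bigr)+\bigl(\beta|\nabla l|^2+\gamma e^{2l}+\varphi\bigr)^{2},\]
and then expanding the remaining square produces the six explicit cross- and square-terms $\beta^{2}|\nabla l|^{4}$, $\gamma^{2}e^{4l}$, $\varphi^{2}$, $2\beta\gamma|\nabla l|^{2}e^{2l}$, $2\beta\varphi|\nabla l|^{2}$, and $2\gamma\varphi e^{2l}$. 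Each of these gets multiplied by the factor $2(\alpha-\beta)/(n\alpha^2)$ and lands in exactly one of the brackets of the asserted inequality: the $|\nabla l|^{2}e^{2l}$ contribution joins the $(-2\alpha b-\beta b-3\gamma)$ terms already coming from Lemma 2.1; the $\varphi|\nabla l|^{2}$ piece sits by itself; and the $\varphi e^{2l}$ piece combines with the $2b\varphi e^{2l}$ and $2a\gamma e^{2l}$ pieces from Lemma 2.1.

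Finally, I would substitute this lower bound into the identity of Lemma 2.1 and regroup, keeping the $2\nabla l\cdot\nabla H$ transport term and the $\varphi_t-\Delta\varphi-2\nabla l\cdot\nabla\varphi$ piece untouched, and rewriting $-2be^{2l}(H-\varphi)$ as the $-2be^{2l}$ factor multiplying $H$ inside the first bracket (with the $+2be^{2l}\varphi$ piece absorbed into the third bracket). I do not expect a genuine obstacle: the only analytic input is the Cauchy--Schwarz step on the Hessian, and the direction of the resulting inequality is correct precisely because of condition (a). The main source of potential error is clerical, namely keeping track of all nine summands produced by the expansion of the square and collecting them into the four brackets written in the statement; I would verify the bookkeeping by grouping terms according to their monomial type in $H$, $|\nabla l|^2$, $e^{2l}$, $\varphi$, and $|\nabla l|^4$, $e^{4l}$, $\varphi^2$, $|\nabla l|^2 e^{2l}$, $\varphi|\nabla l|^2$, $\varphi e^{2l}$.
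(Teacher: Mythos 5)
Your proposal is correct and is exactly the paper's argument: apply $|\nabla\nabla l|^2 \geq \frac{1}{n}(\Delta l)^2$ (valid under condition (a)), substitute $\Delta l = \frac{1}{\alpha}(H-\beta|\nabla l|^2-\gamma e^{2l}-\varphi)$, and expand the resulting square, collecting terms by monomial type. The bookkeeping you outline matches the brackets in the statement term for term.
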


\begin{proof} We will achieve our result by applying the Cauchy-Schwarz inequality in the form of $|\nabla \nabla l|^2 \geq \frac{1}{n}(\Delta l)^2$ and also by substituting $\Delta l = \frac{1}{\alpha}(H - \beta |\nabla l|^2 - \gamma e^{2l} - \varphi)$. Upon doing this, we receive the following:
\begin{eqnarray*}\label{hterm}
\sq H&\geq& 2\nabla l \cdot \nabla H + 2\frac{(\alpha-\beta)}{n \alpha^2}(H - \beta |\nabla l|^2 - \gamma e^{2l} - \varphi)^2 + \varphi_t - \Delta \varphi-2\nabla l \cdot \nabla \varphi\\
&-& 2be^{2l}\bigg[(H - \varphi) + 2\alpha |\nabla l|^2 + \beta |\nabla l|^2-\gamma\frac{a}{b} + 3\frac{\gamma}{b}|\nabla l|^2\bigg]\\
&=& 2\nabla l \cdot \nabla H + H\bigg[2\left(\frac{\alpha-\beta}{n\alpha^2}\right)(H-2\beta |\nabla l|^2-2\gamma e^{2l}-2\varphi)-2be^{2l}\bigg]\\
&+& 2\left(\frac{\alpha-\beta}{n\alpha^2}\right)\bigg[\beta^2|\nabla l|^4 + \gamma^2e^{4l}+\varphi^2\bigg]+2|\nabla l|^2e^{2l}\bigg[2\left(\frac{\alpha-\beta}{n\alpha^2}\right)\beta \gamma -2\alpha b-\beta b-3\gamma\bigg]\\
&+& |\nabla l|^2\varphi\bigg[4\frac{\alpha-\beta}{n\alpha^2}\beta\bigg]+e^{2l}\bigg[4\left(\frac{\alpha-\beta}{n\alpha^2}\right)\gamma \varphi+2b\varphi+2a\gamma\bigg]+\varphi_t-\Delta \varphi - 2\nabla l \cdot \nabla \varphi.
\end{eqnarray*}

This yields the desired inequality.
\end{proof}

\section{Proof of the Main Theorem}

We now proceed to prove our main theorem. We apply the parabolic maximum principle by assuming for the sake of contradiction that there exists a first point $(z, t_0)$, $t_0 \neq 0$ at which $H(z,t_0) = 0$, which we show must occur in some compact region away from the origin.  At such a first time, the time derivative $H_t \leq 0$, the Laplacian $\Delta H \geq 0$, and the gradient $\nabla H = 0$ (vector). Our method of proof will be working with the time evolution of the right hand quantities to construct a contradiction of the form $0 \geq H_t(z, t_0) \geq A(z,t_0) > 0$ for some quantity $A$.  As a consequence of this contradiction, the quantity $H$ must be nonnegative for all space and time.\\
\indent Assume we are at the first point $(z, t_0)$ where $H=0$, at which $\nabla H$ is the 0 vector.  Therefore, by simplifying (9), we have:
\begin{eqnarray*}
\sq H &\geq& 2\left(\frac{\alpha-\beta}{n\alpha^2}\right)\bigg[\beta^2|\nabla l|^4\bigg]+2|\nabla l|^2e^{2l}\bigg[2\left(\frac{\alpha-\beta}{n\alpha^2}\right)\beta \gamma -2\alpha b-\beta b-3\gamma\bigg]\\
&+& 2\left(\frac{\alpha-\beta}{n\alpha^2}\right)\bigg[\gamma^2e^{4l}+\varphi^2\bigg]+|\nabla l|^2\varphi\bigg[4\left(\frac{\alpha-\beta}{n\alpha^2}\right)\beta\bigg] +e^{2l}\bigg[4\left(\frac{\alpha-\beta}{n\alpha^2}\right)\gamma \varphi+2b\varphi+2a\gamma\bigg]\\
&+&\varphi_t-\Delta \varphi - 2\nabla l \cdot \nabla \varphi
\end{eqnarray*}
From conditions $(a)$ and $(b)$, the first two terms are both nonnegative.  Thus:
\begin{eqnarray}
\sq H &\geq& 2\left(\frac{\alpha-\beta}{n\alpha^2}\right)\bigg[\gamma^2e^{4l}+\varphi^2\bigg]+\varphi_t - \Delta \varphi - 2\nabla l \cdot \nabla \varphi.\\
&+& |\nabla l|^2\varphi\bigg[4\left(\frac{\alpha-\beta}{n\alpha^2}\right)\beta\bigg]+e^{2l}\bigg[4\left(\frac{\alpha-\beta}{n\alpha^2}\right)\gamma \varphi+2b\varphi+2a\gamma\bigg]. \nonumber
\end{eqnarray}
Via application of the Cauchy-Schwarz inequality $a^2-2ab \geq -b^2$, we get:
$$|\nabla l|^2\varphi\bigg[4\left(\frac{\alpha-\beta}{n\alpha^2}\right)\beta\bigg]- 2\nabla l \cdot \nabla \varphi \geq -\frac{n\alpha^2|\nabla \varphi|^2}{4\beta(\alpha-\beta)\varphi}.$$

Hence,
\begin{equation}\label{split}
\sq H \geq 2\left(\frac{\alpha-\beta}{n\alpha^2}\right)\bigg[\gamma^2e^{4l}+\varphi^2\bigg]+\varphi_t - \Delta \varphi\\
- \frac{n\alpha^2|\nabla \varphi|^2}{4\beta(\alpha-\beta)\varphi}+e^{2l}\bigg[4\left(\frac{\alpha-\beta}{n\alpha^2}\right)\gamma \varphi+2b\varphi+2a\gamma\bigg].
\end{equation}

We will now use Cauchy-Schwarz again:
$$2\left(\frac{\alpha-\beta}{n\alpha^2}\right)\gamma^2e^{4l}+2e^{2l}\bigg[2\left(\frac{\alpha-\beta}{n\alpha^2}\right)\gamma \varphi+b\varphi+a\gamma\bigg]\geq -\frac{n\alpha^2}{2(\alpha-\beta)\gamma^2}\bigg[\left(2\left(\frac{\alpha-\beta}{n\alpha^2}\right)\gamma+b\right)\varphi+a\gamma\bigg]^2.$$

Therefore, we arrive at the following:
\begin{eqnarray*}
\sq H &\geq& 2\left(\frac{\alpha-\beta}{n\alpha^2}\right)\varphi^2+\varphi_t - \Delta \varphi\\
&-& \frac{n\alpha^2|\nabla \varphi|^2}{4\beta(\alpha-\beta)\varphi}-\frac{n\alpha^2}{2(\alpha-\beta)\gamma^2}\bigg[\left(2\left(\frac{\alpha-\beta}{n\alpha^2}\right)\gamma+b\right)\varphi+a\gamma\bigg]^2.
\end{eqnarray*}

To simply our notation for our differential equation, we define the following constants:
$$\omega = \frac{1}{\alpha}\sqrt{\frac{2(\alpha-\beta)}{n}},$$
$$\mu = a\alpha\sqrt{\frac{n}{2(\alpha-\beta)}},$$
$$\nu = \frac{1}{\alpha}\sqrt{\frac{2(\alpha-\beta)}{n}}+\frac{\alpha b}{\gamma}\sqrt{\frac{n}{2(\alpha-\beta)}}.$$

Then, the above inequality becomes
\begin{equation}
\sq H \geq (\omega \varphi)^2 - (\mu+\nu\varphi)^2 + \varphi_t - \left(\Delta \varphi + \frac{1}{2\beta\omega^2}\frac{|\nabla \varphi|^2}{\varphi}\right).
\end{equation}

In the same fashion as \cite{cck15}, we can define our test function to have a spatially-dependent portion that is the sum of rational functions of the form:
$$\sum_{k=1}^n \left(\frac{c}{(x_k-p_k)^2}+\frac{c}{(q_k-x_k)^2}\right).$$
\indent In doing so, we accomplish the dual task of causing this differential term to be 0 by choosing an appropriate $c$ as well as ensuring that the test function blows up towards positive infinity at the boundary of the $n$-rectangle $R = \prod[p_i, q_i]$.  Therefore, we can ensure that any point at which $H(x,t) = 0$ occurs in a spatially compact region.  Then, we can take each $p_k \to -\infty$ and $q_k \to \infty$ to retrieve only the time-dependent part in the limiting case.  Therefore, we can choose our function $\varphi$ to be time dependent only, and focus on solving the following differential inequality:
\begin{equation}
(\omega \varphi)^2 - (\mu+\nu\varphi)^2 + \varphi_t > 0.
\end{equation}

In order to solve this differential equation, we first assume that inequality $(c)$ holds.  Then, we show that $\varphi(t)$ is a valid solution to this differential equation which possesses the properties we desire.  For:
$$\varphi(t) = \left(\dfrac{a\alpha}{1-e^{2at}}\right)\left(\dfrac{\gamma}{\alpha b}e^{2at}-\dfrac{\alpha \gamma n}{4\gamma(\alpha-\beta)+\alpha^2bn}\right) = \frac{\mu}{1-e^{2\mu\omega t}}\left(\frac{1}{\nu - \omega}e^{2\mu\omega t} - \frac{1}{\nu + \omega}\right).$$

By Lemma 4 of \cite{CLPW14}, we know that in the constant form this is a valid solution to this differential equation.  The only other behavior we desire is that $\varphi(t) > 0$ for all time and that $\varphi(t)$ diverges towards positive infinity as $t \to 0$, so that we can ensure that $H(x,t)$ starts off positive and therefore its first zero must be a negative time derivative.
\indent For any $t > 0$, we have:
\begin{align*}
\text{sign}(\varphi(t)) &= \text{sign}\left[\left(\dfrac{a\alpha}{1-e^{2at}}\right)\right]\text{sign}\left[\left(\dfrac{\gamma}{\alpha b}e^{2at}-\dfrac{\alpha \gamma n}{4\gamma(\alpha-\beta)+\alpha^2bn}\right)\right].\\
\end{align*}
The sign of the first term is certainly negative, as both $\alpha > 0$ and $a > 0$.  Furthermore, by application of inequality $(c)$, we see that the second term is negative at time $t = 0$, and since $\gamma < 0$, for any $t > 0$ this term is also negative.  Thus, the overall sign is positive, and $\varphi(t) > 0 \forall t$.\\
\indent We can observe further that the limit behavior of the function as $t \to 0$ can be broken down into two terms as well.  Thus, we observe:
$$\lim_{t \to 0} \left(\dfrac{a\alpha}{1-e^{2at}}\right) = -\infty.$$
Similarly, by applying inequality $(c)$ we can observe:
$$\lim_{t \to 0} \left(\dfrac{\gamma}{\alpha b}e^{2at}-\dfrac{\alpha \gamma n}{4\gamma(\alpha-\beta)+\alpha^2bn}\right)  = \left(\dfrac{\gamma}{\alpha b}-\dfrac{\alpha \gamma n}{4\gamma(\alpha-\beta)+\alpha^2bn}\right)< 0.$$

Therefore, the limit of the entire function $\varphi(t)$ as $t \to 0$ must be positive infinity, and the function exhibits the behavior we desire. Thus, we have the contradiction
$$0 \geq \sq H  \geq (\omega \varphi)^2 - (\mu+\nu\varphi)^2 + \varphi_t > 0.$$
This proves our theorem in the case that inequalities $(a)$, $(b)$, and $(c)$ hold.

Now, we assume that inequality $(c)$ does not hold.  In this case, we refer back to $\eqref{split}$:
\begin{eqnarray*}
\sq H &\geq& 2\left(\frac{\alpha-\beta}{n\alpha^2}\right)\bigg[\psi^2\bigg]+\psi_t - \Delta \psi- \frac{n\alpha^2|\nabla \psi|^2}{4\beta(\alpha-\beta)\psi}\\
&+&2\left(\frac{\alpha-\beta}{n\alpha^2}\right)\gamma^2e^{4l}+2e^{2l}\bigg[\left(2\left(\frac{\alpha-\beta}{n\alpha^2}\right)\gamma +b\right)\psi+a\gamma\bigg].
\end{eqnarray*}

If $(c)$ does not hold, that means $2\left(\dfrac{\alpha-\beta}{n\alpha^2}\right)\gamma+b > 0$, and thus for a sufficiently well-chosen $\psi(t)$ and for small $t$, we can ensure that both of the last two terms are positive and therefore ignore them both in our calculations.  So, we choose:
$$\psi_1(t) = \dfrac{n\alpha^2}{2(\alpha-\beta)t}, \ \ t \leq \dfrac{n\alpha^2}{2(\alpha-\beta)(-a\gamma)}\left(2\left(\dfrac{\alpha-\beta}{n\alpha^2}\right)\gamma+b\right) = T.$$
If this is the case, we claim that for any $t \leq T$, the last two terms are both non-negative.  Since $\psi(t)$ is a decreasing function, it suffices to check at $t = T$:
\begin{eqnarray*}
2e^{2l}\bigg[\left(2\left(\frac{\alpha-\beta}{n\alpha^2}\right)\gamma +b\right)\psi(T)+a\gamma\bigg] &=& 2e^{2l}\bigg[\left(\frac{n\alpha^2}{2(\alpha-\beta)}\right)\left(\frac{2(\alpha-\beta)(-a\gamma)}{n\alpha^2}\right) + a\gamma\bigg]\\
&=& 2e^{2l}\bigg[-a\gamma+a\gamma\bigg] = 0.
\end{eqnarray*}
Thus, we can ignore these last two terms, as well as ignoring the spatial terms once again, and solve the ordinary differential equation:
\begin{equation}
2\left(\frac{\alpha-\beta}{n\alpha^2}\right)\psi^2+\psi_t > 0.
\end{equation}
whose solution is given by $\psi(t)$ as desired.  This function is also positive for all time and approaches positive infinity as $t \to 0$.\\
\indent In the case that $t > T$, we cannot ignore the last two terms, and we must carry out the Cauchy-Schwarz approximation as was done in the last case, from which receive:
\begin{equation}
(\omega \psi)^2 - (\mu+\nu\psi)^2 + \psi_t > 0.
\end{equation}
with the same constants $\nu, \mu, \omega$ as defined earlier.  However, we must solve this time to be continuous and differentiable with $\psi(t)$ at $t = T$.  Thus, we get:
$$\psi_2(t) = \frac{-\mu(1+e^{2\mu\omega (t-T)})}{(\nu - \omega)e^{2\mu\omega (t-T)}+(\nu + \omega)} = an\alpha^2\left(\dfrac{-\gamma\left(e^{2a(t-T)}+1\right)}{n\alpha^2b\left(e^{2a(t-T)}+1\right)+4\gamma(\alpha-\beta)}\right).$$
This function is positive for all time $t > T$, as the numerator is positive since $\gamma < 0$, and the denominator is positive because inequality $(c)$ does not hold.  Furthermore:
$$\psi_1(T) = -a\gamma\left(\frac{1}{2\left(\frac{\alpha-\beta}{n\alpha^2}\right)\gamma+b}\right).$$
$$\psi_2(T) = \frac{-2an\alpha^2\gamma}{2n\alpha^2b+4\gamma(\alpha-\beta)} = \dfrac{-a\gamma}{2\left(\frac{(\alpha-\beta)}{n\alpha^2}\right)\gamma+b}.$$
$$\psi_1'(T) = \frac{-n\alpha^2}{2(\alpha-\beta)T^2} = \frac{-2n(\alpha-\beta)a^2\alpha^2\gamma^2}{(2(\alpha - \beta)\gamma + bn\alpha^2)^2}.$$
$$\psi_2'(T) = an\alpha^2\left(\dfrac{-\gamma\left(e^{2a(T-T)}+1\right)}{n\alpha^2b\left(e^{2a(T-T)}+1\right)+4\gamma(\alpha-\beta)}\right) = \frac{-2n(\alpha-\beta)a^2\alpha^2\gamma^2}{(n\alpha^2b +2\gamma(\alpha-\beta))^2}.$$
Thus, $\psi(t)$ is continuous, differentiable, and positive everywhere, and we have the contradiction
$$0 \geq \sq H  \geq (\omega \psi)^2 - (\mu+\nu\psi)^2 + \psi_t > 0.$$
This proves our theorem in the case that inequalities $(a)$, $(b)$, and $(d)$ hold.

\begin{remark}
$\psi(t)$ turns out to be exactly twice differentiable.
\end{remark}

\begin{remark}
It is worth noting that $\displaystyle\lim_{t \to \infty}{\varphi(t)} =\displaystyle\lim_{t \to \infty}{\psi(t)} = \displaystyle\frac{a}{b}|\gamma| = -\displaystyle\frac{a}{b}\gamma$. When estimating quantities using the Harnack, it is often useful to consider just the limiting case $t \to \infty$, allowing us to replace all occurrences of $\phi(t)$ and $\psi(t)$ with $-\displaystyle\frac{a}{b}\gamma$.
\end{remark}

\begin{remark}
There are situations in which we can obtain a simpler Harnack by choosing specific values of $\alpha$, $\beta$, or $\gamma$. If we choose $\gamma = -2nb$, we get that
$$H =\alpha \Delta l + \beta |\nabla l|^2 - 2nb e^{2l} + \dfrac{n\alpha^2}{2(\alpha-\beta)t} \geq 0.$$
\end{remark}
\section{Applications}

In this section we give several applications of our differential Harnack estimate. First, we integrate our Harnack along a space-time curve to derive a classical Harnack inequality. Then, we characterize traveling wave solutions and standing solutions to the Newell-Whitehead equation.

\subsection{Classical Harnack}
Here we use our differential Harnack estimate to prove a classical Harnack inequality, comparing values of a positive solutions at different points.

\begin{corollary}
Let $f$ be a positive solution to \eqref{eq:1}. Pick two points $(x_1, t_1), (x_2, t_2) \in \bR ^n \times [0,\infty)$ with $0<t_1 < t_2$. Then we have 
\begin{equation}
\frac{f(x_2,t_2)}{f(x_1,t_1)} \geq \exp \left\{\frac{-(x_2 - x_1)^2}{4(t_2 - t_1)}\right\} \cdot  \exp \left\{ a\left(1+\frac{n}{3}\right)(t_2-t_1)\right\} \cdot \left(\frac{1-e^{2at_1}}{1-e^{2at_2}}\right)^{2n/3}.
\end{equation}
\end{corollary}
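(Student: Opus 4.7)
The plan is to convert the differential Harnack \eqref{H} of Theorem~1.1 into a pointwise lower bound on $l_t$ and then integrate along a straight space-time path from $(x_1,t_1)$ to $(x_2,t_2)$. Since $l=\log f$, differentiating and using \eqref{eq:1} gives $l_t=\Delta l+|\nabla l|^2+a-be^{2l}$, so $\Delta l=l_t-|\nabla l|^2-a+be^{2l}$; substituting into \eqref{H} rearranges to
\[
\alpha l_t\geq(\alpha-\beta)|\nabla l|^2-(\alpha b+\gamma)e^{2l}+a\alpha-\varphi(t).
\]

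I would then specialize the free parameters to $\alpha=1$, $\beta=0$ (allowed in the limit by Remark 1.2), and $\gamma=-nb$. Conditions (a)--(c) are immediate: $1>0$; $-nb\leq -2b/3$ since $n\geq 1$; and $4\gamma\alpha+n\alpha^2 b=-3nb<0$. With these choices the coefficient $-(\alpha b+\gamma)=b(n-1)\geq 0$, so the $e^{2l}$ term can simply be dropped. Parametrizing the straight space-time path as $\eta(s)=x_1+\frac{s-t_1}{t_2-t_1}(x_2-x_1)$ for $s\in[t_1,t_2]$, the chain rule together with the completion of the square $|\nabla l|^2+\nabla l\cdot\dot\eta\geq-|\dot\eta|^2/4$ yields
\[
\tfrac{d}{ds}l(\eta(s),s)\geq -\tfrac{|\dot\eta|^2}{4}+a-\varphi(s).
\]

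Integrating from $t_1$ to $t_2$, the spatial contribution is $\int_{t_1}^{t_2}|\dot\eta|^2\,ds=|x_2-x_1|^2/(t_2-t_1)$, which exponentiates to the first factor in the claimed inequality. For the time-dependent piece, the chosen parameters reduce $\varphi$ to $\varphi(s)=-an(1+3e^{2as})/[3(1-e^{2as})]$. Rewriting this as $\varphi(s)=an-4an/[3(1-e^{2as})]$ and using the substitution $u=e^{2as}$ with partial fractions gives
\[
\int_{t_1}^{t_2}(a-\varphi(s))\,ds=a\bigl(1+\tfrac{n}{3}\bigr)(t_2-t_1)+\tfrac{2n}{3}\log\frac{1-e^{2at_1}}{1-e^{2at_2}},
\]
which exponentiates to the remaining two factors.

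The main task is identifying the right parameter choice. The value $\beta=0$ is forced by matching the $1/4$ coefficient on $|\dot\eta|^2$, and then $\gamma=-nb$ is the unique choice that simultaneously (i) makes the coefficient of $e^{2l}$ nonnegative so the cubic nonlinearity drops out, (ii) satisfies conditions (b) and (c) uniformly in all dimensions $n\geq 1$, and (iii) produces the exponent $2n/3$ together with the constant $1+n/3$ seen in the corollary. Once these parameters are fixed, the remaining work is elementary calculus.
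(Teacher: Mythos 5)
Your proposal is correct and follows essentially the same route as the paper's proof: rewrite the differential Harnack as a lower bound on $l_t$ via $l_t=\Delta l+|\nabla l|^2+a-be^{2l}$, specialize to $\beta=0$ and $\gamma=-nb\alpha$ (so the $e^{2l}$ term has a favorable sign and drops), complete the square against the path velocity to produce the $-(x_2-x_1)^2/4(t_2-t_1)$ term, and integrate $a-\varphi$ explicitly to obtain the remaining factors. Your parameter choice, sign checks of conditions (a)--(c), and the evaluation of the $\varphi$ integral all match the paper's computation.
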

\begin{proof}
Let $\Gamma$ be any space-time curve connecting $(x_1,t_1)$ and $(x_2,t_2)$, and define $l = \log f$ as before. Then we have
$$l(x_2, t_2) - l(x_1,t_1) = \int_{\Gamma} \left[ l_t + \nabla l \cdot \frac{dx}{dt} \right] \; dt.$$
Using the fact that $l_t = \Delta l + |\nabla l|^2 + a - be^{2l}$, we get
$$l(x_2, t_2) - l(x_1,t_1) = \int_{\Gamma} \left[ \Delta l + |\nabla l|^2 + a - be^{2l}+ \nabla l \cdot \frac{dx}{dt} \right] \; dt .$$
By the choice of our $\alpha$, $\beta$, and $\gamma$ (see below), it follows from our differential Harnack estimate that $\Delta l \geq \displaystyle\frac{-\beta}{\alpha} |\nabla l|^2 + \displaystyle\frac{-\gamma}{\alpha} e^{2l} + \displaystyle\frac{-1}{\alpha} \varphi(t)$. Thus, we get
$$l(x_2,t_2) - l(x_1,t_1) \geq \int_{\Gamma} \left[ |\nabla l|^2\left(1-\frac{\beta}{\alpha}\right) - e^{2l}\left(b + \frac{\gamma}{\alpha}\right) + a - \frac{\varphi}{\alpha} + \nabla l \cdot \frac{dx}{dt} \right] \; dt. $$
Applying the Cauchy-Schwarz Inequality $a^2 + 2ab \geq -b^2$ to the $\nabla l$ terms, we see that 
$$ |\nabla l|^2\left(1-\frac{\beta}{\alpha}\right) + \nabla l \cdot \frac{dx}{dt} \geq \frac{-1}{4}\left(\frac{\alpha}{\alpha-\beta}\right)\left(\frac{dx}{dt}\right)^2.$$
Thus
$$l(x_2, t_2) - l(x_1, t_1) \geq \int_{\Gamma} \left[-\frac{1}{4}\left(\frac{\alpha}{\alpha-\beta}\right)\left(\frac{dx}{dt}\right)^2- e^{2l}\left(b + \frac{\gamma}{\alpha}\right) + a - \frac{\varphi}{\alpha}\right] \; dt.$$

At this point, we may choose $\beta =0$ and $\gamma = -nb\alpha$, which implies $b + \displaystyle\frac{\gamma}{\alpha} \leq0$ and $4\gamma(\alpha - \beta) + n\alpha^2\beta = -3n\alpha^2b < 0$, thus we can simplify the above inequality to
\begin{equation}
l(x_2, t_2) - l(x_1, t_1) \geq \int_{\Gamma} \left[ -\frac{1}{4} \left(\frac{dx}{dt}\right)^2 + a - \frac{\varphi}{\alpha}\right] \; dt.
\end{equation}
Because $\Gamma$ is any space-time curve connecting $(x_1,t_1)$ and $(x_2,t_2)$, we can take the infimum over all such space-time paths to get
$$\int_{\Gamma}\left(\frac{dx}{dt}\right)^2 \; dt = \frac{(x_2 - x_1)^2}{t_2 - t_1},$$
and
$$\int_{\Gamma} \frac{\varphi}{\alpha} \; dt = \int_{t_1}^{t_2} \left(\frac{an}{e^{2at}-1}\right)\left(e^{2at}+\frac{1}{3}\right) \; dt = \frac{1}{3}an(t_1-t_2) + \frac{2}{3}n\log\frac{1-e^{2at_2}}{1-e^{2at_1}}.$$
Thus we get:

$$l(x_2,t_2) - l(x_1,t_1) \geq \frac{-1}{4}\frac{(x_2 - x_1)^2}{t_2 - t_1} + \left(a+\frac{1}{3}an\right)(t_2-t_1) + \frac{2}{3}n\log\frac{1-e^{2at_1}}{1-e^{2at_2}}.$$

Exponentiate both sides to arrive at Corollary $4.1$.
\end{proof}
\subsection{Traveling Wave Solutions}
We call $f$ a traveling wave solution of $(1)$ if it is of the form
$$f(x,t) = f(x_1, x_2, \dots, x_n, t) = v(x_1, x_2, \dots, x_n + \eta t),$$
for some function $v: \bR^n \to \bR$ (see \cite{mihai2015}). Traveling wave solutions to the Newell-Whitehead equation are used to model traveling wave convection in binary fluids, and other forms of oscillatory instability (see \cite{malomed}). We use our differential Harnack to derive a lower bound for $\eta$, the wavespeed, of a positive traveling wave solution.

\begin{corollary}
Let $f(x,t) = v(x_1, x_2, \dots, x_n + \eta t)$ be a positive traveling wave solution. Suppose that $v(z) \to 0$ for some $z$ such that $|z| \to \infty$. Then we have
$$ \eta^2 \geq \frac{4}{3}a.$$
\end{corollary}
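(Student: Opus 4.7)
My plan is to apply Theorem 1.1 with parameters tailored to the traveling-wave equation and then use a quadratic-discriminant argument. Set $\beta=0$ and $\gamma=-\tfrac{2}{3}\alpha b$, which saturates inequality (b); condition (c) holds for $n\leq 2$ and (d) for $n\geq 3$, so Theorem 1.1 applies in either regime. By Remark 3.2, $\varphi(t)/\alpha \to \tfrac{2a}{3}$ (and likewise $\psi(t)/\alpha \to \tfrac{2a}{3}$) as $t \to \infty$, and the Harnack rearranges to the pointwise lower bound $\Delta l \geq \tfrac{2b}{3} f^2 - \varphi(t)/\alpha$.

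Substituting this into the identity $l_t = \Delta l + |\nabla l|^2 + a - b f^2$ for $l = \log f$ yields
\[
l_t \geq |\nabla l|^2 + \Big(a - \frac{\varphi(t)}{\alpha}\Big) - \frac{b}{3} f^2.
\]
For a traveling wave, $l_t = \eta\, \partial_{x_n} l$ and $|\nabla l|^2 \geq (\partial_{x_n} l)^2$, so writing $u := \partial_{x_n} l(x,t)$ produces the quadratic inequality
\[
u^2 - \eta\, u + \Big(a - \frac{\varphi(t)}{\alpha} - \frac{b}{3} f(x,t)^2\Big) \leq 0.
\]
Since $u$ is real, the discriminant of this quadratic in $u$ must be non-negative, giving the pointwise bound $\eta^2 \geq 4\bigl(a - \varphi(t)/\alpha\bigr) - \tfrac{4b}{3} f(x,t)^2$.

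To drive $f \to 0$, I use the decay hypothesis: let $z_k$ be a sequence with $|z_k|\to\infty$ and $v(z_k)\to 0$, and for any fixed $t > 0$ set $x_k := (z_k^{(1)},\ldots,z_k^{(n-1)},\,z_k^{(n)}-\eta t)$, so that the traveling-wave form gives $f(x_k,t) = v(z_k) \to 0$. Applying the pointwise bound at $(x_k,t)$ and letting $k\to\infty$ yields $\eta^2 \geq 4\bigl(a - \varphi(t)/\alpha\bigr)$ for every $t>0$; letting $t\to\infty$ and using Remark 3.2 finishes the proof: $\eta^2 \geq \tfrac{4a}{3}$. The crux is the initial parameter choice, which is engineered so that the Harnack-derived coefficient of $f^2$ weakens the PDE's $-bf^2$ term by exactly a factor of $3$ and leaves $a-\varphi(t)/\alpha \to a/3 > 0$; the discriminant step and the two limits are then routine.
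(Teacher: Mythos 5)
Your proposal is correct and follows essentially the same route as the paper: the discriminant argument in $u=\partial_{x_n}l$ is just the paper's Cauchy--Schwarz/completing-the-square step on $|\nabla v|/v$ in different form, and the parameter choice $\beta=0$, $\gamma=-\tfrac{2}{3}\alpha b$ together with the limits $v\to0$ and $t\to\infty$ reproduces the paper's final inequality exactly. Your explicit verification of conditions (b)--(d) and the construction of the sequence $x_k$ are slightly more careful than the paper's presentation, but the underlying argument is the same.
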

\begin{proof}
We start by rewriting our Harnack quantity so that it is in terms of $f$, instead of $l$. From our original estimate, we have
$$\alpha \Delta l + \beta |\nabla l|^2 +\gamma e^{2l} + \varphi(t) \geq 0.$$
Recalling that $l = \log f$ and $\Delta f = f_t - af + bf^3$, we get that
\begin{equation}
\alpha \frac{f_t}{f} - \alpha a + (\beta - 
\alpha)\frac{|\nabla f|^2}{f^2} + (\gamma + \alpha b) f^2 + \varphi(t) \geq 0.
\end{equation}
This is our revised Harnack estimate. In the case that $f(x,t) = v(x_1, x_2, \dots, x_n + \eta t)$ is a traveling wave solution, we get that 
$$\alpha\eta  \frac{v_{x_n}}{v} - \alpha a + \alpha b v^2 + (\beta - \alpha)\frac{|\nabla v|^2}{v^2} + \gamma v^2 + \varphi(t) \geq 0.$$
Notice that $|v_{x_n}| \leq |\nabla v|$. Applying  Cauchy-Schwarz again then yields that 
$$(\alpha - \beta)\displaystyle\frac{|\nabla v|^2}{v^2} - \alpha \eta \displaystyle\frac{|\nabla v|}{v} \geq - \displaystyle\frac{(\alpha \eta)^2}{4(\alpha - \beta)}. $$
Thus our inequality becomes
$$\frac{(\alpha \eta)^2}{4(\alpha - \beta)} \geq (a\alpha - \varphi) - (b\alpha + \gamma)v^2.$$
Because this inequality holds for any $t$, we can simplify by considering just the limiting cases, where $v \to 0$, and, by Remark 3.2, $\displaystyle\lim_{t \to \infty}{\varphi(t)} = \displaystyle\lim_{t \to \infty}{\psi(t)} = -\displaystyle\frac{a}{b} \gamma$. Rearranging then gives us our bound on $\eta$:
$$\eta^2 \geq a\left(\alpha + \frac{\gamma}{b}\right)\frac{4(\alpha-\beta)}{\alpha^2}.$$
To maximize the right hand side, we choose $\beta=0$ and $\gamma = -\displaystyle\frac{2}{3}b\alpha$, giving us Corollary $4.2$.
\end{proof}
We now use our differential Harnack estimate to prove a gradient estimate for the traveling wave solutions to the Newell-Whitehead equation.
\begin{corollary}
Let $f(x,t) = v(x_1, x_2, \dots, x_n + \eta t)$ be a positive traveling wave solution. Then we have
$$|\nabla v| \leq v\eta.$$
\end{corollary}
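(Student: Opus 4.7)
The plan is to specialize the revised Harnack inequality (17) from the proof of Corollary 4.2 to a traveling wave, and then to choose $\alpha,\beta,\gamma$ so that the $v^2$ and constant terms cancel, leaving a clean first-order inequality in $v$ alone. Substituting $f_t = \eta v_{x_n}$ and $\nabla f = \nabla v$ into (17) gives
$$\alpha\eta\,\frac{v_{x_n}}{v} - \alpha a + (\beta-\alpha)\,\frac{|\nabla v|^2}{v^2} + (\gamma + \alpha b)\,v^2 + \varphi(t) \geq 0.$$

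Next I would take $\beta = 0$ (in the limit, per Remark 1.2) and $\gamma = -\alpha b$. A quick check shows these choices satisfy (a) and (b) of Theorem 1.1: the right-hand side of (b) with $\beta = 0$ is $-2\alpha b/3$, and $-\alpha b \leq -2\alpha b/3$. Depending on whether $n<4$ or $n\geq 4$, either condition (c) or (d) holds, so either $\varphi$ or $\psi$ serves as the test function; in either case Remark 3.2 gives that the time-dependent term tends to $-a\gamma/b = a\alpha$ as $t\to\infty$. The key algebraic payoff of this choice is that the $v^2$ coefficient $\gamma+\alpha b$ vanishes outright, while the constants satisfy $-\alpha a + a\alpha = 0$.

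Letting $t\to\infty$ in the specialized Harnack (the spatial $v$-terms are $t$-independent) therefore yields
$$\alpha\eta\,\frac{v_{x_n}}{v} \;\geq\; \alpha\,\frac{|\nabla v|^2}{v^2},$$
i.e., $\eta\, v\, v_{x_n} \geq |\nabla v|^2$. Since $v > 0$ and $|\nabla v|^2 \geq 0$, this forces $\eta v_{x_n} \geq 0$, so $\eta v_{x_n} = |\eta|\,|v_{x_n}|$. Combined with the elementary bound $|v_{x_n}| \leq |\nabla v|$, dividing by $|\nabla v|$ (when nonzero) produces $|\nabla v| \leq |\eta|\,v$, which is the stated estimate $|\nabla v|\leq v\eta$ under the natural convention $\eta>0$ (the case $\eta<0$ reduces to this by the reflection $x_n\mapsto -x_n$).

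The proof is essentially algebraic once the substitution and parameter choice are in hand, and the main ``work'' is simply spotting the fortunate cancellation offered by $\beta=0$, $\gamma=-\alpha b$. The only subtlety worth flagging is the need to pass to the limit $t\to\infty$ in order to eliminate $\varphi(t)$; since the inequality holds for every $t>0$ and the $v$-dependent terms do not involve $t$, this passage is automatic and not a genuine obstacle.
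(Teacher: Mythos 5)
Your proposal is correct and follows essentially the same route as the paper: substitute the traveling wave ansatz into the revised Harnack inequality (18), take $t\to\infty$ so the time-dependent term becomes $-a\gamma/b$, choose $\beta=0$ and $\gamma=-\alpha b$ so the $v^2$ and constant terms cancel, and conclude from the resulting quadratic inequality in $|\nabla v|/v$. Your treatment is marginally more careful than the paper's about the sign of $\eta$ (the paper's step $\eta v_{x_n}\leq \eta|\nabla v|$ tacitly assumes $\eta\geq 0$), but this is a refinement of the same argument, not a different one.
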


\begin{proof}
We start with (18), our Harnack in terms of $f$,:
$$\alpha \frac{f_t}{f} - \alpha a + (\beta - 
\alpha)\frac{|\nabla f|^2}{f^2} + (\gamma + \alpha b) f^2 + \varphi(t) \geq 0.$$
Now, from $f(x,t) = v(x_1, x_2, \dots, x_n + \eta t)$, we use that $f_t = \eta v_{x_n} \leq \eta |\nabla v|$ and again take the limiting case $\varphi, \psi \to -\displaystyle\frac{a}{b}\gamma$ to get 
$$(\gamma+ \alpha b)\left(v^2 - \frac{a}{b}\right) \geq (\alpha - \beta)\frac{|\nabla v|^2}{v^2} - \alpha \eta \frac{|\nabla v|}{v}.$$
By choosing $\beta = 0$ and $\gamma = -b\alpha$, we reduce the expression to
$$0 \geq \frac{|\nabla v|^2}{v^2} - \eta \frac{|\nabla v|}{v}.$$
Simplification yields Corollary $4.3$.
\end{proof}

\subsection{Standing Solutions}
We call a solution $f$ a \textit{standing solution} if $f_t=0$. 
\begin{corollary}
All positive standing solutions are constant.
\end{corollary}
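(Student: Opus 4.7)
The most direct approach is to recognize that a positive standing solution $f$, by definition satisfying $f_t \equiv 0$, is a degenerate traveling wave: setting $v(x) := f(x)$ and $\eta = 0$, we have $f(x,t) = v(x_1,\dots,x_n + \eta t)$ trivially. Corollary 4.3 then applies directly and yields $|\nabla v| \leq v\eta = 0$, whence $v$, and therefore $f$, is constant.

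For a self-contained derivation that does not piggyback on Corollary 4.3, the plan is to return to the revised Harnack (18), substitute $f_t = 0$, and pass to the limit $t\to\infty$ (using Remark 3.2 to replace $\varphi(t)$ by $-a\gamma/b$). This produces a pointwise inequality in $x$ alone:
$$-\alpha a + (\beta - \alpha)\frac{|\nabla f|^2}{f^2} + (\gamma + \alpha b)f^2 - \frac{a\gamma}{b} \geq 0.$$
The strategy is then to tune $\alpha,\beta,\gamma$ so that both the $f^2$ coefficient and the constant term cancel, leaving only a sign-definite gradient term. The natural choice is $\beta = 0$ and $\gamma = -\alpha b$: the $f^2$ coefficient becomes zero, the constant becomes $-\alpha a + \alpha a = 0$, and the inequality collapses to $-\alpha\,|\nabla f|^2/f^2 \geq 0$, which forces $|\nabla f| \equiv 0$.

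The only bookkeeping needed is to verify that $\beta = 0$, $\gamma = -\alpha b$ is admissible for Theorem 1.1. Condition (a) is immediate; condition (b) amounts to $-\alpha b \leq -2b\alpha/3$, which clearly holds. The discriminant $4\gamma(\alpha - \beta) + n\alpha^2 b$ evaluates to $(n-4)\alpha^2 b$, so condition (c) applies in dimensions $n \leq 3$ and condition (d) applies in dimensions $n \geq 4$; in either case Remark 3.2 guarantees that both $\varphi$ and $\psi$ share the common limit $-a\gamma/b$ used above, so the argument runs uniformly across all $n$. No serious obstacle is anticipated beyond this parameter check: the entire proof is essentially a one-line corollary of the Harnack estimate, with the real work having already been done in establishing Theorem 1.1 and its limiting behavior.
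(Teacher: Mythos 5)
Your self-contained derivation is exactly the paper's proof: substitute $f_t=0$ into (18), pass to the limit $\varphi,\psi\to -a\gamma/b$, and choose $\beta=0$, $\gamma=-\alpha b$ so that the $f^2$ and constant terms cancel, leaving $-\alpha|\nabla f|^2/f^2\geq 0$; your parameter check (including noting that whether (c) or (d) holds depends on $n$, a point the paper leaves implicit) is correct. The opening shortcut via Corollary 4.3 with $\eta=0$ is valid but not genuinely different, since the proof of that corollary specializes to the identical computation.
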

\begin{proof}
We begin with (18):
$$\alpha \frac{f_t}{f} - \alpha a + (\beta - 
\alpha)\frac{|\nabla f|^2}{f^2} + (\gamma + \alpha b) f^2 + \varphi(t) \geq 0.$$
At this point, we assume $f_t=0$. We also again take the limiting case where  $\varphi, \psi \to -\displaystyle\frac{a}{b}\gamma$. Thus we have
$$- \alpha a + (\beta - \alpha)\frac{|\nabla f|^2}{f^2} + (\gamma+b\alpha) f^2 - \frac{a}{b}\gamma \geq 0.$$
At this point we rearrange and factor to get
$$|\nabla f|^2 \leq \frac{f^2}{\alpha - \beta}(b\alpha + \gamma)\left(f^2 - \frac{a}{b}\right).$$
Choosing $\gamma = -b\alpha$, the right hand side becomes $0$, giving us $| \nabla f|=0$. Because we have $| \nabla f | = f_t = 0$, we conclude that $f$ is constant.
\end{proof}
\def\cprime{$'$}

\paragraph{\textbf{Acknowledgements:}} D. Booth and J. Burkart's research were
supported by NSF through the Research Experience for Undergraduates Program at Cornell University, grant-1156350. Z.  Munro and J. Snyder's research were
supported by Cornell University Summer Program for Undergraduate Research. X. Cao's research was partially supported by a grant from the Simons Foundation (\#280161). The authors would like to thank Professor Robert Strichartz for his encouragement.

\bibliographystyle{plain}
\bibliography{bio}

\end{document}